%
%
%

\documentclass[graybox]{svmult}


\usepackage{type1cm}        
%
\usepackage{makeidx}         
\usepackage{graphicx}        
\usepackage{multicol}        
\usepackage[bottom]{footmisc}
\usepackage{verbatim}

\usepackage{newtxtext}       %
\usepackage[varvw]{newtxmath}       
\usepackage{overpic}
\usepackage{tikz}
\usepackage{amscd,latexsym,subfigure,verbatim,epsfig,amsmath,supertabular}
\usepackage[graph,frame,poly,arc]{xy}
\usepackage{calc}
\usepackage{graphicx}
\usepackage{mathtools}
\usepackage{enumitem}
\usepackage[T1]{fontenc}


\makeindex             


\newtheorem{defn0}{Definition}[subsection]

\theoremstyle{definition}
\newtheorem{example0}[definition]{Example}

\newcommand{\KK}{{\mathbb K}}
\newcommand{\PP}{{\mathbb P}}

 \DeclarePairedDelimiter{\seq}{\lbrace}{\rbrace}

\DeclarePairedDelimiter{\abs}{\lvert}{\rvert}

\DeclareMathOperator{\Betti}{Betti}

\newcommand{\Tor}{\mathop{\rm Tor}\nolimits}

\newcommand{\Ext}{\mathop{\rm Ext}\nolimits}

\begin{document}
\title*{Betti tables forcing failure of the Weak Lefschetz Property}

\author{Sean Grate, Hal Schenck}
\institute{Sean Grate \at Department of Mathematics, Auburn University, \email{sean.grate@auburn.edu} \and Hal Schenck \at Department of Mathematics, Auburn University, \email{hks0015@auburn.edu}}
\maketitle

\begin{abstract}
\noindent 
We study the Artinian reduction $A$ of a configuration of points $X \subset \PP^n $, and the relation of the geometry of $X$ to Lefschetz properties of $A$.
Migliore \cite{migliore_geometry_2008} initiated the study of this connection, with a particular focus on the Hilbert function of $A$, and further results appear in work of Migliore--Mir\'o-Roig--Nagel \cite{migliore_monomial_2011}.
Our specific focus is on Betti tables rather than Hilbert functions, and we prove that a certain type of Betti table forces the failure of the Weak Lefschetz Property (WLP). The corresponding Artinian algebras are typically not level, and the failure of WLP in these cases is not detected in terms of the Hilbert function.
\end{abstract}

\section{Introduction}
Let $ R = \KK[x_1,\hdots,x_n] $ and let $ A = R/I $ be a standard graded Artinian algebra, where $ I \subseteq R $ is a homogeneous ideal.
We say that $ A $ has the \emph{Weak Lefschetz Property} (\emph{WLP}) if multiplication by a general linear form \vskip -.1in
\[
A_i \stackrel{\cdot l}{\longrightarrow} A_{i+1} 
\]
has full rank for all $ i \geq 0 $, so is either injective or surjective.
If $ \cdot l^k \colon A_i \rightarrow A_{i+k} $ is full rank for all $ i,k \geq 0 $, we say $ A $ has the \emph{Strong Lefschetz Property} (\emph{SLP}).
As $ A $ is Artinian, $ A_i = 0 $ for $ i \gg 0$, so there are only finitely many maps to check.

The quintessential problem is to characterize those Artinian algebras which do (or do not) have WLP.
Many of the approaches to this problem in the literature tackle this by imposing constraints on the ideal $ I $ used in the construction of the Artinian algebra $ A $; see \cite{migliore_survey_2013} for a nice overview of the field and open problems.

In general, the Hilbert function of $ A $ does not determine if an Artinian graded algebra has WLP, but nevertheless the study of the relationship between Hilbert functions and Lefschetz properties has been fruitful. For example, in \cite{stanley_hilbert_1978}, Stanley constructs an Artinian-Gorenstein algebra with non-unimodal $ h $-vector $ (1,13,12,13,1) $; failure of unimodality forces a concomitant failure of WLP. Further results along these lines appear in \cite{boij_level_2007}, which proves existence of monomial Artinian level algebras of codimension $ r \geq 3 $ also having non-unimodal $ h $-vector. 

In this note, we examine a finer invariant than the Hilbert series: the graded Betti numbers of $ A $, and we show that certain Betti tables force failure of WLP in situations where the failure is not guaranteed from behavior of the Hilbert function. 

This project began with the goal of probing the Weak Lefschetz Property from a geometric perspective. Specifically, given a pointset $ X \subset \PP^n $, does the geometry of the points determine if the associated Artinian reduction has the WLP? Some other natural questions motivate the results of this paper:
\begin{itemize}
    \item Is there ever a set of points $Y \subset X$ (and perhaps conditions on $|Y|$ relative to $|X|$) that guarantee failure or success of WLP? \vskip .05in
    \item Given a configuration of points $X$, if WLP fails, can we predict the degree in which WLP fails in terms of the geometry of $X$? \vskip .05in
        \item If a set of points has multiple ``subconfigurations" of points--for example, two susbets of points lying on different hypersurfaces $ {\bf V}(f) $ and $ {\bf V}(g) $--then how does this structure influence WLP? What if there are conditions on $ f $ and $ g $?
\end{itemize}

\subsection{History and previous results} We enumerate some of the recent results which are closest in spirit to our work in this note; the Lefschetz literature is vast and so the list is not exhaustive.
\begin{itemize}
    \item Proposition 5.3 of \cite{migliore_geometry_2008} gives the existence of a reduced, level set of points where the WLP fails at semi-specified degrees via liason theory. \vskip .05in
\item Corollary 5.4 of \cite{boij_level_2007} shows the existence of pointsets in $ \PP^n $ with non-unimodal $ \Delta h $-vector, hence their Artinian reductions fail to have WLP. \vskip .05in
\item For points on a rational normal curve, Theorem 4.3 of \cite{altafi_hilbert_2022} shows that an Artinian Gorenstein algebra related to the points has the SLP. \vskip .05in
\item A similar result to that above is proved in \cite{altafi_hilbert_2022} for points on a conic in $ \PP^2 $.
\end{itemize}

Unexpected curves also seem to play an important role in the interplay between geometry and Lefschetz properties of pointsets.
Theorem 7.5 of \cite{cook_line_2018} shows that the set of points dual to a line arrangement in $ \PP^2 $ has an unexpected curve of a specified degree if and only if the the Artinian algebra formed from the line arrangement fails SLP in a specified range and degree. Expanding on the influence of unexpected curves on WLP, the papers \cite{bauer_quartic_2020}, \cite{di_marca_unexpected_2020}, and \cite{harbourne_unexpected_2021} give many more examples of unexpected curves, providing a richer collection of geometries to explore.

Geproci sets provide another approach to understanding how the geometry of a set of points influences WLP.
From a geproci set, a dual set of linear forms is defined and one can ask if the ideal generated by certain powers of the linear forms has WLP.
In the case that the geproci set is an $ (a,b) $ grid, \cite{chiantini_configurations_2022} shows that WLP does not hold for most pairs $ (a,b) $. Because of this connection between geproci sets and Lefschetz properties, there is much recent work on this topic; see for example \cite{chiantini_sets_2021}, \cite{chiantini_configurations_2022}, \cite{chiantini_classification_2023}.

\subsection{Results of this note} We approach Lefschetz questions from a geometric standpoint: our starting point is a configuration of reduced points $X \subset \PP^n$ (with $n \geq 3$), and we then study the Lefschetz properties of the Artinian reduction of $I_X$ by a generic hyperplane. Our aim is to investigate how the geometry of the points manifests itself in the associated Artinian reduction; in particular, if it is possible to detect whether or not the Artinian reduction has WLP.

The pointsets we study do not typically correspond to level algebras.
A number of papers such as \cite{stanley_hilbert_1978}, \cite{migliore_hilbert_2007}, \cite{migliore_strength_2008}, \cite{migliore_monomial_2011}, and \cite{altafi_hilbert_2022} have studied the influence of the Hilbert function on Lefschetz properties. A novel aspect of our approach is that the finer data available in the Betti table can distinguish WLP properties for pointsets having the same Hilbert function. In particular, from the Betti table of a pointset $ X \subset \PP^n $, we describe a simple indicator which is sufficient (but far from necessary) to show that the Artinian reduction of a pointset fails to have WLP.

\subsection{Notation}
Throughout this note, $ X $ will denote a finite set of distinct points in $ \PP^n $ (with $ n\geq 3$), $S = \KK[x_0,\hdots,x_n]$ will be the coordinate ring of $ \PP^n $, and $\KK$ a field. It is well known that the characteristic of $\KK$ has a major impact on WLP; however the results in this note are characteristic independent. 
Let $I_X \subseteq S$ denote the ideal of $ X $, and form the  Artinian reduction $ A_X $ by quotienting by a generic linear form $L$, which by definition misses all the points of $X$. Hence $L$ is a nonzero divisor on $S/I_X$. Let $J_X$ denote the Artinian reduction of $I_X$, so that 
\[
 A(X) = A_X = S/(I_X,L) \simeq R/J_X.
 \]
 We write $ \Betti(X) $ to denote the \emph{Betti table} (or \emph{Betti diagram}) of $ A_X $ (or equivalently, of $ J_X $).
The entries of $ \Betti(X)$ are given by the ranks and shifts of the summands appearing in the minimal free resolution of $ J_X $ (or $I_X$, since $L$ is a nonzero-divisor the Betti table is unchanged), so $\Betti(X)$ is a table whose entry in the $j^{th}$ row and $i^{th}$ column is 
\[
 \beta_{i,i+j}= \dim_{\KK}\Tor_i^R(R/J_X,\KK)_{i+j}.
\]

\section{All but one point on a hypersurface}
Given a pointset $X \subset \PP^n$ (with $n \geq 3$), we will show that if nearly all of the points lie on a unique hypersurface, then $A_X$ fails to have WLP. We illustrate this fact first with an example.
\begin{example0}
    Consider the points $ X \subset \PP^3 $ given as the columns of the matrix
    \begin{equation*}
        \begin{pmatrix}
            1 & 0 & 0 & 1 & 1 & 0 & 1 & 0 \\
            0 & 1 & 0 & 1 & 0 & 1 & 1 & 0 \\
            0 & 0 & 1 & 0 & 1 & 1 & 1 & 0 \\
            0 & 0 & 0 & 0 & 0 & 0 & 0 & 1
        \end{pmatrix},
    \end{equation*}
    and let $ X_f $ denote the seven points obtained from the first seven columns of the matrix above.
    The Hilbert functions of the Artinian reductions are $\{1,2,3,1\}$ for $X_f$ and $\{1,3,3,1\}$ for $X$. For the Betti table of $A_X$, we have
    \begin{equation*}
        \Betti(A_X) = 
        \begin{array}{l|cccc}
             & 0 & 1 & 2 & 3 \\
            \hline
            0 & 1 & . & . & . \\
            1 & . & 3 & 3 & 1 \\
            2 & . & 3 & 4 & 1 \\
            3 & . & . & 1 & 1
        \end{array}.
    \end{equation*}
    Let $ q $ be the point given by the last column of the matrix above. We constructed $ X $ so that all of the points in $ X_f $ lie on the plane $ x_3 = 0 $ and $ q $ lies off of it.
    In this way, we have that 
    \[
    (I_{X_f})_1 = (x_3) ,\mbox{ and }I_{\seq{q}} = (x_0,x_1,x_2).
    \]
    \vskip -.1in
    \noindent Thus
    \vskip -.2in
    \begin{align*}
       (I_X)_2 &= (I_{X_f})_1 \cap (I_{\seq{q}})_1 = (x_3) \cap (x_0,x_1,x_2) = (x_0 x_3, x_1 x_3, x_2 x_3).
    \end{align*}
    Now we consider the Artinian reduction $ A_X $ of $ I_X $ by a general linear form 
    \vskip -.1in
    \[L = \sum_{i=0}^3 a_i x_i.
    \]
    \vskip -.1in
   \noindent We then have 
    \[ (I_X,L)_2 = (x_0 x_3, x_1 x_3, x_2 x_3, x_3 L)= (x_0 x_3, x_1 x_3, x_2 x_3, x_3^2).
    \] 
    It is easy to see that multiplication by a generic linear form of $A$ will not have full rank, as $x_3$ is in the kernel of the multiplication map, hence $A_X$ does not have WLP. This is the motivating example for the following theorem.
\end{example0}

\begin{theorem}
\label{thm:all-but-one}
    Let $ X_f \subset \PP^n $ ($ n \geq 3 $) be a set of distinct points lying on a hypersurface $ {\bf V}(f) $ for some polynomial $ f \in R $ with $ \deg(f) = d $, such that $ (I_{X_f})_d = (f) $ and $ (I_{X_f})_{<d} = 0 $.
    Choose $ q \not\in {\bf V}(f) $ such that if $ X = X_f \cup \seq{q} $, then $ (I_X)_d = 0 $ and $ \dim_{\KK}((I_X)_{d+1}) = n $. Then the Artinian reduction $ A_X $ does not have WLP.
    In particular, if $ l $ is a general linear form, then $ A_d \xrightarrow{\cdot l} A_{d+1} $ does not have full rank.
\end{theorem}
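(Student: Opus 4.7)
The plan is to produce an element $[f] \in (A_X)_d$ that is annihilated by every linear form in $(A_X)_1$, and then combine this with a Hilbert function comparison showing $\dim_{\KK}(A_X)_d \leq \dim_{\KK}(A_X)_{d+1}$ to force failure of full rank for multiplication by a general linear form $l$.

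The key structural observation is that the hypothesis $\dim_{\KK}(I_X)_{d+1} = n$ pins $(I_X)_{d+1}$ down exactly. Setting $V := (I_{\seq{q}})_1$ for the $n$-dimensional space of linear forms vanishing at $q$, the containment $f \cdot V \subseteq (I_X)_{d+1}$ is immediate: a product $fv$ vanishes on $X_f$ (since $f$ does) and at $q$ (since $v$ does). Because $f \neq 0$, $\dim_{\KK}(f \cdot V) = n = \dim_{\KK}(I_X)_{d+1}$, so $(I_X)_{d+1} = f \cdot V$.

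The kernel argument then has two steps. First, $[f] \neq 0$ in $(A_X)_d$: the hypothesis $(I_X)_d = 0$ gives $(I_X, L)_d = L \cdot S_{d-1}$, and since $L$ is general while $f$ is a fixed polynomial of positive degree, $L \nmid f$, so $f \notin L \cdot S_{d-1}$. Second, $[f] \cdot [l] = 0$ in $(A_X)_{d+1}$ for every linear form $l$: genericity of $L$ also gives $L(q) \neq 0$, so $L \notin V$ and $S_1 = V \oplus \KK L$; writing $l = l_V + \mu L$ with $l_V \in V$, one computes $fl = fl_V + \mu f L$, which lies in $f \cdot V + L \cdot S_d = (I_X)_{d+1} + L \cdot S_d = (I_X, L)_{d+1}$.

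For the dimension check, since $L$ is a non-zero divisor on $S/I_X$, one has $h_{A_X}(k) = h_{S/I_X}(k) - h_{S/I_X}(k-1)$. The hypotheses $(I_X)_{\leq d} = 0$ and $\dim_{\KK}(I_X)_{d+1} = n$ yield $h_{A_X}(d) = \binom{n+d-1}{n-1}$ and $h_{A_X}(d+1) = \binom{n+d}{n-1} - n$; the required inequality $h_{A_X}(d) \leq h_{A_X}(d+1)$ reduces via Pascal's rule to $\binom{n+d-1}{n-2} \geq n$, which holds for $n \geq 3$ and $d \geq 1$. Since the kernel of $\cdot l$ is nontrivial and the source has dimension at most that of the target, full rank would force injectivity, a contradiction. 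The main thing to keep in mind is that the identification $(I_X)_{d+1} = f \cdot V$ actually produces a socle element, not just an element in the kernel of one specific multiplication; everything else is a routine Hilbert function computation.
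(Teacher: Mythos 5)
Your proof is correct and follows essentially the same route as the paper's: identify $(I_X)_{d+1}$ as $f$ times the $n$-dimensional space of linear forms vanishing at $q$, exhibit $[f]$ as a nonzero socle element to kill injectivity, and compare $\dim_\KK A_d$ with $\dim_\KK A_{d+1}$ to rule out surjectivity. If anything, your coordinate-free phrasing and your use of the non-strict inequality $h_{A_X}(d)\leq h_{A_X}(d+1)$ are slightly more careful than the paper, whose claim that the inequality is strict fails at $n=3$, $d=1$ (both sides equal $3$), though the conclusion is unaffected since a map between equidimensional spaces with nontrivial kernel still fails to have full rank.
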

\begin{proof}
    By changing coordinates, we may assume that $ q = [1:0:\hdots:0]$ and that $V(x_0) \cap X = \emptyset$. 
    We then have $ (I_X)_{\leq d} = (f) $ and $ I_{\seq{q}} = (x_1,\hdots,x_n) $.
    So then
    \begin{equation*}
        (I_X)_{d+1} = (f) \cap (x_1,\hdots,x_n) = (x_1 f,\hdots,x_n f).
    \end{equation*}
    It is clear that $x_i f \in (I_X)_{d+1}$ for $1 \leq i \leq n$, and we know the reverse containment $(x_1 f,\hdots,x_n f) \subseteq (I_X)_{d+1}$ since $\dim_{\KK}(I_X)_{d+1} = n$ by assumption.
    By our choice of coordinates above, the linear form $L = x_0$ is a non-zero divisor, and $A_X = R/J_X $ is the Artinian reduction of $ X $. Let $ l = \sum_{i=1}^n a_i x_i \in R $ be a general linear form; abusing notation we write $f$ for the reduction of $f$ modulo $x_0$. Then 
    \begin{equation*}
        l \cdot f = \left( \sum_{i=1}^n a_i x_i \right) \cdot f = \sum_{i=1}^n a_i x_i \cdot f =0.
    \end{equation*}
    Thus $ A_d \xrightarrow{\cdot l} A_{d+1} $ has nontrivial kernel, and is not injective. Another way to see this is to extend $\{f\}$ to a basis $\{f,g_1,g_2,\hdots,g_s\} $ of $ A_d $, and fix a basis $\{g_1',\hdots,g_t'\}$ of $A_{d+1}$.
    The map $ A_d \xrightarrow{\cdot l} A_{d+1} $ can then be represented by the matrix
    \begin{equation*}
      \bordermatrix{ & f & g_1 & g_2 & \hdots & g_s \cr
        g_1' & 0 & . & . & \hdots & . \cr
        g_2' & 0 & . & . & \hdots & . \cr
        g_3' & 0 & . & . & \hdots & . \cr
        \vdots & \vdots & \vdots & \vdots & \ddots & \vdots \cr
        g_{t'} & 0 & . & . & \hdots & .
      }
    \end{equation*}
    Thus $ A_{d} \xrightarrow{\cdot l} A_{d+1} $ is not injective since it has a zero column. The map cannot be surjective since $ \dim_{\KK}(A_d) = \binom{n-1+d}{n-1} < \binom{n+d}{n-1} - n = \dim_{\KK}(A_{d+1}) $. Note that this inquality is indeed strict since we assumed $n \geq 3$.
    Therefore $ A_d \xrightarrow{\cdot l} A_{d+1} $ does not have full rank, and $ A $ does not have WLP.
\end{proof}

The hypotheses of Theorem~\ref{thm:all-but-one} ensure that the points $ X_f $ lie on a \emph{unique} hypersurface and that $ q $ lies off of $ V(f) $. The number of points in $ \PP^n $ needed to ensure the hypotheses of Theorem~\ref{thm:all-but-one} is one less than the number of degree $d$ hypersufaces, so $ \abs{X_f} = \binom{n+d}{d} - 1 $ points suffice (if chosen generically) to fix $ {\bf V}(f) $, and 
\[
\dim_{\KK}((I_{X_f})_d) = 1, \mbox{ and }\dim_{\KK}((I_{X_f})_{<d}) = 0.
\]
Now suppose $ q \not\in V(f) $ and let $ \tilde{X} = X_f \cup \seq{q} $.
Then $ \dim_{\KK}((I_{\tilde{X}})_d) = 0 $ by the choice of $ q $.
We can then choose $ m \coloneqq \binom{n+d}{d+1} - n $ distinct generic points on $ {\bf V}(f) \setminus X_f $ to get $ \dim_{\KK}((I_{\tilde{X}})_{d+1}) = n $.
To see this, note that $ \dim_{\KK}(R_{d+1}) = \binom{n+d+1}{d+1} $ and solve the equation $ \dim_{\KK}(R_{d+1}) - (\abs{\tilde{X}} + m) = n $, i.e. the equation
\begin{align*}
    \binom{n+d+1}{d+1} - \left( \binom{n+d}{d} + m \right) = n,
\end{align*}
for $ m $ and use the identity $ \binom{n+d+1}{d+1} = \binom{n+d}{d} + \binom{n+d}{d+1} $.
Let $ \hat{X} $ be the set of $ m $ extra points chosen, and set $ X = \tilde{X} \cup \hat{X} $.
Then $ \dim_{\KK}(I_X)_{d+1} = n $, as desired.

\section{Maximal Koszul tails force failure of WLP}
The examples that led to Theorem~\ref{thm:all-but-one} exhibit very similar Betti tables, leading us to the following definition. \vskip .1in
\noindent {\bf Definition 1}
    We say a Betti table $ B $ has an \emph{$ (n,d) $-Koszul tail} if it has an upper-left principal block of the form
    \begin{equation*}
        \begin{array}{l|cccccccc}
             & 0 & 1 & 2 & 3 & \hdots & n-2 & n-1 & n \\
            \hline
            0 & 1 & . & . & . & \hdots & . & . & . \\
            1 & . & . & . & . & \hdots & . & . & . \\
            \vdots & \vdots & \vdots & \vdots & \vdots & \vdots & \vdots & \vdots & \vdots \\
            d-1 & . & . & . & . & \hdots & . & . & . \\
            d & . & n & \binom{n}{2} & \binom{n}{3} & \hdots & \binom{n}{n-2} & n & 1
        \end{array}.
    \end{equation*}
    If $ B $ has an $ (n,d) $-Koszul tail and is the Betti table for an Artinian ring $\KK[x_1,\ldots,x_n]/I$, then we say $ B $ has a \emph{maximal}  $(n,d) $-Koszul tail.

\begin{example0}\label{MaxKtail}
    Consider the pointset $ X_f \subset \PP^4 $ lying on a unique hypersurface $ {\bf V}(f) $ with $ \deg(f) = 3 $ (this will consist of $ 34 $ points plus $ 31 $ extra points).
    Take $ 5 $ points $ X_Q $ lying off of $ {\bf V}(f) $, but on a codimension $ 3 $ linear space.
    Let $ X \coloneqq X_f \cup X_Q $.
    Then the Betti table of $ A_X $ is
    \begin{equation*}
        \begin{array}{l|ccccc}
             & 0 & 1 & 2 & 3 & 4 \\
            \hline
            0 & 1 & . & . & . & . \\
            1 & . & . & . & . & . \\
            2 & . & . & . & . & . \\
            3 & . & 3 & 3 & 1 & . \\
            4 & . & 44 & 111 & 90 & 20 \\
            5 & . & . & . & . & 3
        \end{array}.
    \end{equation*}
    So the Betti table of $ A $ has a $ (3,3) $-Koszul tail, and in this case, $ A $ has WLP.
    However, if we form $ A^* $ from $ A $ by quotienting by yet another generic linear form, i.e. $ A^* = A/(L') $ with $ L' \in A_1 $, then
    \begin{equation*}
        \Betti(A^*) = 
        \begin{array}{l|cccc}
             & 0 & 1 & 2 & 3 \\
            \hline
            0 & 1 & . & . & . \\
            1 & . & . & . & . \\
            2 & . & . & . & . \\
            3 & . & 3 & 3 & 1 \\
            4 & . & 15 & 27 & 12
        \end{array}.
    \end{equation*}
    The Betti table of $ A^* $ has a maximal $ (3,3) $-Koszul tail, and $ A^* $ fails WLP from degree $ 3 $ to degree $ 4 $.
    This suggests that an Artinian ring whose Betti table has a maximal $ (n,d) $-Koszul tail fails WLP, and motivates our next theorem. The example above then follows from Corollary 2. 
\end{example0}

\begin{theorem}
\label{thm:maximal-Koszul-tail}
    An Artinian algebra $A=\KK[x_1,\ldots, x_n]/I$ whose Betti table has a maximal $ (n,d) $-Koszul tail does not have WLP; the map $A_d \stackrel{\cdot l}{\rightarrow} A_{d+1}$ is not full rank. 
\end{theorem}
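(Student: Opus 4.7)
My plan is to extract a degree-$d$ socle element of $A$ from the rightmost entry of the Koszul tail, use it to exhibit a nonzero kernel of $\cdot l$, and then rule out surjectivity by a short Hilbert function calculation.

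First I would translate the Betti number $\beta_{n,n+d}(A) = 1$ into a socle statement. Since $R = \KK[x_1,\ldots,x_n]$, the Koszul complex $K_\bullet$ on $x_1,\ldots,x_n$ is a minimal free resolution of $\KK$ over $R$, so $\Tor^R_n(A,\KK)$ can be computed as the top homology of $K_\bullet \otimes_R A$. The top piece $K_n(A) \cong A(-n)$ maps into $K_{n-1}(A)$ by $a \mapsto (\pm x_1 a, \ldots, \pm x_n a)$, whose kernel is exactly the elements of $A$ killed by every variable, namely $\mathrm{soc}(A)$. Tracking the shift gives the standard identification $\Tor^R_n(A,\KK) \cong \mathrm{soc}(A)(-n)$, hence $\beta_{n,n+d}(A) = \dim_{\KK}\mathrm{soc}(A)_d$. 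The maximal Koszul tail hypothesis forces this number to be $1$, so there exists a nonzero $g \in A_d$ with $x_i g = 0$ in $A$ for every $i$.

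Non-injectivity of $\cdot l$ is then immediate: writing $l = \sum_i a_i x_i$, we get $l \cdot g = \sum_i a_i(x_i g) = 0$ in $A$, so $g$ is a nonzero element of $\ker(\cdot l\colon A_d \to A_{d+1})$. To rule out surjectivity, I would use the rest of the Koszul tail to compute dimensions. The vanishing $\beta_{1,j}(A) = 0$ for $j \leq d$ implies $I$ has no generators of degree $\leq d$, so $I_{\leq d} = 0$ and $\dim_{\KK} A_d = \binom{n+d-1}{n-1}$. The entry $\beta_{1,d+1}(A) = n$ then gives $n$ minimal generators in degree $d+1$, and these span all of $I_{d+1}$ since no lower-degree generators exist, so $\dim_{\KK} A_{d+1} = \binom{n+d}{n-1} - n$. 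Pascal's identity yields $\dim A_{d+1} - \dim A_d = \binom{n+d-1}{n-2} - n$, which is nonnegative for $n \geq 3$ and $d \geq 1$. Since $\ker(\cdot l)$ contains $g \neq 0$ and $\dim A_d \leq \dim A_{d+1}$, the map cannot have full rank.

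The one delicacy I expect is keeping the graded shifts straight in the socle identification; once that bookkeeping is done the rest reduces to a transparent Hilbert function comparison. The only borderline dimension case is $n = 3$, $d = 1$, where $\dim A_d = \dim A_{d+1}$, but there a non-injective map between equidimensional vector spaces is automatically non-surjective, so the conclusion still holds. Note that this also recovers Theorem~\ref{thm:all-but-one} as a special case, since the ideal $(x_1 f, \ldots, x_n f)$ produced there is precisely the ``$\mathfrak{m}\cdot g$'' pattern one reads off from a socle element $g$ in the relevant degree.
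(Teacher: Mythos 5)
Your proof is correct, but it takes a genuinely different route from the paper's. The paper argues at the level of the whole top strand of the resolution: it dualizes, uses the structure of $\Ext^n(R/I,R)$ to identify the degree-$(n+d)$ component of the last differential with $[x_1,\ldots,x_n]^T$, double-dualizes using that $R/I$ is Artinian to see that the row-$d$ strand is the Koszul complex on $x_1,\ldots,x_n$, and concludes the structural statement $I_{d+1}=f\cdot(x_1,\ldots,x_n)$, after which it finishes exactly as in Theorem~\ref{thm:all-but-one}. You instead use only the two ends of the tail: the standard identification $\Tor_n^R(A,\KK)\cong \mathrm{soc}(A)(-n)$ converts $\beta_{n,n+d}=1$ into a nonzero degree-$d$ socle element $g$, which is annihilated by every linear form and so kills injectivity outright, while the column-one data ($\beta_{1,j}=0$ for $j\le d$ and $\beta_{1,d+1}=n$) pins down $\dim A_d$ and $\dim A_{d+1}$ and shows $\dim A_d\le\dim A_{d+1}$, so full rank would force injectivity. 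Your route is more elementary (no Ext-duality) and in fact shows that the middle binomial entries of the Koszul tail are not needed for the WLP conclusion --- only $\beta_{n,n+d}\ge 1$ together with the first-column vanishing and count; the paper's route buys the stronger conclusion that $I_{d+1}$ is a principal ideal times the maximal ideal, which ties the theorem back to the geometric setup of Theorem~\ref{thm:all-but-one}. You are also right to single out the borderline case $n=3$, $d=1$: there $\dim A_d=\dim A_{d+1}$ (the inequality the paper asserts to be strict in the proof of Theorem~\ref{thm:all-but-one} is actually an equality in that case), but, as you note, a non-injective map between spaces of equal dimension still fails to have full rank, so the theorem is unaffected.
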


\begin{proof}
    By reducing to the argument used in Theorem~\ref{thm:all-but-one}, it suffices to show that a maximal $(n,d)$-Koszul tail forces
    \[
    I_{d+1} = f \cdot (x_1,\ldots,x_n) \mbox{ for some } f \in \KK[x_1,\ldots,x_n]_d.
    \]
    Write the last differential in the resolution as $d_n$. For $A$ Artinian, the maximal Koszul tail hypothesis is equivalent to having the degree $n+d$ component of $d_n$--that is, the last map in the top strand--equal to 
    \[
    \psi = [x_1,\ldots,x_n]^T.
    \]
    To see this, note that the cokernel of $d_n^T$ is $\Ext^n(R/I,R)$, and the cokernel of $\psi^T$ is a direct summand of $\Ext^n(R/I,R)$--this is because the only part of $d_n^T$ whose target is $R(n+d)$ is exactly $\psi^T$, and we know the only associated prime of $\Ext^n(R/I,R)$ is the maximal ideal. But now we may dualize again, and since $R/I$ is Artinian, \[
    \Ext^n(\Ext^n(R/I,R),R) \simeq R/I.
    \]
    In particular, dualizing the resolution of $R/I$ gives a free resolution of $\Ext^n(R/I,R)$, and so the top nonzero row of the Betti table is  a resolution of the cokernel of $[x_1,\ldots,x_n]$, which is the Koszul complex on $[x_1,\ldots,x_n]$. Therefore in degree $d+1$ we have that $I_{d+1} = f \cdot (x_1,\ldots,x_n) $, and we conclude as in the proof of Theorem~\ref{thm:all-but-one}. 
\end{proof}
Note that rather than pointsets, we can take more general quotients. The key is to pass through a pointset having a maximal Koszul tail, which means having a regular sequence of maximal length. In particular, we get the following 
\begin{corollary}
If $T=\KK[x_0, \ldots, x_n]/I$ is Cohen-Macaulay of dimension $m$, and $T$ has a maximal $(n-m,d)$-Koszul tail, then the Artinian reduction of $T$ fails WLP.
\end{corollary}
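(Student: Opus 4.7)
The plan is to reduce the corollary to Theorem~\ref{thm:maximal-Koszul-tail} by passing to an Artinian reduction and showing that the Betti table is preserved.

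First, since $T$ is Cohen--Macaulay of dimension $m$, a generic choice of linear forms $L_1,\ldots,L_m$ in $R := \KK[x_0,\ldots,x_n]$ forms a regular sequence on $T$ (generic linear forms avoid the associated primes of the intermediate quotients). I would form the Artinian reduction
\[
A := T/(L_1,\ldots,L_m),
\]
viewed as a module over $R' := R/(L_1,\ldots,L_m)$, which is a polynomial ring in the remaining variables.

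Next I would verify the standard fact that the graded Betti numbers of $A$ over $R'$ agree with those of $T$ over $R$. Because $L_1,\ldots,L_m$ is a regular sequence on $T$, the Koszul complex on the $L_i$ computes $\Tor^{R}_{\bullet}(T,R')$ and gives $\Tor_i^R(T,R') = 0$ for all $i > 0$. Therefore tensoring the minimal free resolution of $T$ over $R$ with $R'$ yields a minimal free resolution of $A$ over $R'$ with identical Betti table. In particular, the maximal $(n-m,d)$-Koszul tail hypothesized for $T$ transfers to the Betti table of $A$, and the Artinian condition in Definition~1 is now genuinely satisfied by $A$.

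Finally, applying Theorem~\ref{thm:maximal-Koszul-tail} to $A$ shows that $A_d \xrightarrow{\cdot l} A_{d+1}$ fails to have full rank for a general linear form $l$, which is exactly the statement that the Artinian reduction of $T$ does not have WLP. There is no serious obstacle: the entire content is the lemma on preservation of Betti numbers under quotienting by a regular sequence of linear forms, after which the corollary is an immediate consequence of Theorem~\ref{thm:maximal-Koszul-tail}. The only bookkeeping point is matching the parameter of the Koszul tail with the number of variables remaining after the Artinian reduction, which is built into the hypothesis $(n-m,d)$.
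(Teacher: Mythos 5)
Your argument is essentially the one the paper intends: the paper gives no separate proof, only the remark that Cohen--Macaulayness supplies a regular sequence of linear forms of maximal length, so that the Betti table persists to the Artinian reduction and Theorem~\ref{thm:maximal-Koszul-tail} applies; your Tor-vanishing/Koszul-complex lemma is the standard justification of that persistence. One bookkeeping caveat you should not wave away: quotienting $\KK[x_0,\ldots,x_n]$ by $m$ linear forms leaves a polynomial ring in $n+1-m$ variables, so for the tail to be \emph{maximal} in the sense of Definition~1 (and for Theorem~\ref{thm:maximal-Koszul-tail} to apply) its first parameter must be $n+1-m$, not $n-m$ --- the hypothesis as stated only matches your setup if one reads ``dimension $m$'' as the dimension of $\mathrm{Proj}(T)$ and mods out $m+1$ generic linear forms, so your claim that the count is ``built into the hypothesis'' conceals an off-by-one that needs to be pinned down.
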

As Example~\ref{MaxKtail} shows, we are not limited to the Cohen-Macaulay situation:
\begin{corollary}
If $A=\KK[x_1,\ldots, x_{m+n}]/I$ is Artinian with an $(n,d)$-Koszul tail, and there exists a sequence of linearly independent linear forms $I_L = (l_1,\ldots, l_m)$ such that $A/I_L$ has the same top row Betti table as $A$, then $A/I_L$ fails to have WLP. 
\end{corollary}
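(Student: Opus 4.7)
The plan is to reduce this corollary to Theorem 2 by showing that the hypotheses force a \emph{maximal} $(n,d)$-Koszul tail on $A/I_L$. First, since $l_1,\ldots,l_m$ are linearly independent, I perform a linear change of coordinates on $\KK[x_1,\ldots,x_{m+n}]$ to assume $l_i = x_i$ for $i=1,\ldots,m$. Then
\[
A/I_L \;\cong\; \KK[x_{m+1},\ldots,x_{m+n}]\big/\bar I,
\]
where $\bar I$ is the image of $I$ under reduction modulo $(x_1,\ldots,x_m)$. This presents $A/I_L$ as an Artinian quotient of a polynomial ring in exactly $n$ variables.

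Next, I invoke the hypothesis that $A/I_L$ has the same top row Betti table as $A$. Since $A$ has an $(n,d)$-Koszul tail, the top nonzero row of $\Betti(A/I_L)$ is exactly $n,\binom{n}{2},\ldots,\binom{n}{n-2},n,1$, terminating in column $n$. Because $A/I_L$ is presented on precisely $n$ variables (matching the width of the tail) and is Artinian, this $(n,d)$-Koszul tail satisfies the definition of a \emph{maximal} $(n,d)$-Koszul tail from the paper.

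With a maximal $(n,d)$-Koszul tail in hand for $A/I_L$, Theorem 2 applies directly, and I conclude that multiplication by a general linear form $(A/I_L)_d \xrightarrow{\cdot l} (A/I_L)_{d+1}$ fails to have full rank, so $A/I_L$ does not have WLP. The only non-bookkeeping subtlety is verifying Step~1 does not alter Betti numbers; this is standard since a linear change of coordinates induces a graded ring isomorphism, and the hypothesis on the top row of the Betti table is preserved by construction. The main conceptual point — and the reason this is a genuine corollary rather than a tautology — is the observation that after quotienting by $m$ independent linear forms, the ambient polynomial ring has exactly $n$ variables, so that a mere $(n,d)$-Koszul tail is automatically promoted to a \emph{maximal} one, triggering Theorem 2.
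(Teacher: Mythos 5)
Your argument is correct and is exactly the reduction the paper intends: the corollary is stated there without proof as an immediate consequence of Theorem~2, the point being precisely your observation that after a change of coordinates $A/I_L$ is an Artinian quotient of a polynomial ring in exactly $n$ variables, so the $(n,d)$-Koszul tail preserved by the hypothesis on the top row of $\Betti(A/I_L)$ is automatically maximal. Theorem~2 then gives the failure of full rank for $(A/I_L)_d \xrightarrow{\cdot l} (A/I_L)_{d+1}$, as you say.
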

\begin{example0}
    Consider the points $ X \subset \PP^3 $ given as the columns of the matrix
    \begin{equation*}
        \begin{pmatrix}
            1 & 0 & 1 & 0 & 0 & 0 \\
            0 & 1 & 1 & 0 & 0 & 0 \\
            0 & 0 & 0 & 1 & 0 & 1 \\
            0 & 0 & 0 & 0 & 1 & 1
        \end{pmatrix}.
    \end{equation*}
    So $S = \KK[x_0,x_1,x_2,x_3]$ and $I_X = (x_0 x_2, x_1 x_2, x_0 x_3, x_1 x_3, x_0^2 x_1 - x_0 x_1^2, x_2^2 x_3 - x_2 x_3^2)$. The six points lie on the two disjoint lines ${\bf V}(x_0,x_1)$ and ${\bf V}(x_2,x_3)$. Letting $ L$  be a generic linear form, and $A = S/(I_X,L)  = R/J_X$, the $h$-vector of $A$ is $(1,3,2)$ so the one-dimensional socle in degree one does not block WLP.        
     \noindent The Betti table of $ A $ is
    \begin{equation*}
        \begin{array}{l|cccc}
             & 0 & 1 & 2 & 3 \\
            \hline
            0 & 1 & . & . & . \\
            1 & . & 4 & 4 & 1 \\
            2 & . & 2 & 4 & 2
        \end{array}.
    \end{equation*}
    A computation with {\tt Macaulay2} verifies that $ A $ has WLP. This example highlights that even if there is a single minimal-degree element in the socle of $ A $, this does not imply that $ A $ fails WLP. It is the {\em maximal} $(n,d)$-Koszul tail that is the key: the map $A_d \stackrel{\cdot l}{\rightarrow} A_{d+1}$ cannot be surjective when $n\ge 3$. \end{example0}

\noindent A non-maximal $ (n,d) $-Koszul tail does not determine WLP.
In \cite{abdallah_free_2022}, the authors consider the Betti table
\begin{equation*}
    \begin{array}{l|ccccc}
     & 0 & 1 & 2 & 3 & 4 \\
     \hline
     0 & 1 & . & . & . & . \\
     1 & . & 2 & 1 & . & . \\
     2 & . & 5 & 9 & 4 & . \\
     3 & . & 4 & 9 & 5 & . \\
     4 & . & . & 2 & 1 & . \\
     5 & . & . & . & . & 1 \\
    \end{array}.
\end{equation*}
The following two ideals both yield the Betti table above, but the ideal 
\begin{align*}
    &(x_4^2, x_3 x_4, x_3^3, x_2 x_3^2 - x_2^2 x_4, x_1 x_3^2 - x_1 x_2 x_4 + x_2^2 x_4, x_2^2 x_3, x_2^3, \\
    &\hspace*{4cm} x_1^3 x_4 - x_1^2 x_2 x_4 + x_1 x_2^2 x_4, x_1^3 x_3, x_1^3 x_2 - x_1^2 x_2^2, x_1^4)
\end{align*}
has WLP, while the ideal 
\begin{equation*}
    (x_1 x_4, x_1^2, x_3 x_4^2, x_2 x_4^2, x_2^2 x_4, x_1 x_3^2, x_1 x_2^2 - x_3^2 x_4, x_3^4, x_2 x_3^3 - x_4^4, x_2^2 x_3^2, x_2^4)    
\end{equation*}
fails to have WLP.
\begin{remark}
All our computations assume we are starting in $\PP^n$ with $n \ge 3$. Artinian quotients of the form $\KK[x,y]/I$ always have WLP, so our technique does not apply to $(2,d)$-Koszul tails. 
\end{remark}

\section*{Acknowledgements}
We thank Juan Migliore and Uwe Nagel for useful feedback during the meeting ``Workshop on Lefschetz Properties in Algebra, Geometry, Topology and Combinatorics'', held at the Fields Institute in May 2023, and we thank the organizers of that workshop, as well as the organizers of the workshop ``The Strong and Weak Lefschetz Properties'', held in Cortona in September 2022, and an anonymous referee for helpful comments. Computations were preformed using {\tt Macaulay2} \cite{M2}. Schenck was supported by NSF 2006410 and the Rosemary Kopel Brown professorship.

\bibliographystyle{amsplain}
\bibliography{refs}

\end{document}